\def \T {\mathbb{N}_0}
\def \suchthat {\ \big | \ }
\newtheorem{theorem}{Theorem}
\newtheorem{lemma}[theorem]{Lemma}
\newtheorem{proposition}[theorem]{Proposition}
\newtheorem{corollary}[theorem]{Corollary}
\theoremstyle{definition}
\theoremstyle{remark}
\numberwithin{equation}{section}
\title[The $C^{p^\prime}$-regularity conjecture in the plane]{A proof of the $C^{p^\prime}$-regularity conjecture\\ in the plane}
\author[D.J. Ara\'ujo]{Dami\~ao J. Ara\'ujo}
\address{Universidade Federal da Para\'iba, Department of Mathematics, Jo\~ao Pessoa 58.051-900, Brazil}{}
\email{araujo@mat.ufpb.br}
\author[E.V. Teixeira]{Eduardo V. Teixeira}
\address{University of Central Florida, Department of Mathematics, Orlando, FL, 32828}{}
\email{eduardo.teixeira@ucf.edu}
\author[J.M.~Urbano]{Jos\'{e} Miguel Urbano}
\address{CMUC, Department of
Mathematics, University of Coimbra, 3001-501 Coimbra, Portugal}{}
\email{jmurb@mat.uc.pt}
\begin{document}

\subjclass[2010]{Primary 35B65. Secondary 35J60, 35J70}

\keywords{Nonlinear pdes, regularity theory, sharp estimates}

\begin{abstract} 
We establish a new oscillation estimate for solutions of nonlinear partial differential equations of elliptic, degenerate type. This new tool yields a precise control on the growth rate of solutions near their set of critical points, where ellipticity degenerates. As a consequence, we are able to prove the planar counterpart of the longstanding conjecture that solutions of the degenerate $p$-Poisson equation with a bounded source are locally of class $C^{p^\prime}=C^{1,\frac{1}{p-1}}$; this regularity is optimal.

\end{abstract}

\date{\today}

\maketitle

\section{Introduction} \label{sct intro}

In this paper we investigate sharp $C^{1,\alpha}$-regularity estimates for solutions of the degenerate elliptic equation, with a bounded source,
\begin{equation}\label{p-poisson}
	-\Delta_p u = f(x) \in L^\infty(B_1), \qquad p>2.
\end{equation}
Establishing optimal regularity estimates is quite often a delicate matter and, in particular, $f(x) \in L^\infty$ is known to be a borderline condition for regularity. In the linear, uniformly elliptic case $p=2$, solutions of 
$$-\Delta u = f(x) \in L^\infty(B_1)$$ 
are locally in $C^{1,\alpha}$, for every $\alpha \in (0,1)$, but may fail to be in $C^{1,1}$. Obtaining such an estimate in specific situations, like free boundary problems, often involves a deep and fine analysis. 

In the degenerate setting $p>2$, the smoothing effects of the operator are far less efficient. Nonetheless, it is well established, see for instance \cite{DiB, Tol}, that a weak solution to \eqref{p-poisson} is locally of class $C^{1,\beta}$, for some exponent $\beta>0$ depending on dimension and $p$. If $p^\prime$ denotes the conjugate of $p$, i.e.,
$$p + p^\prime= p p^\prime ,$$ 
the radial symmetric example
$$	-\Delta_p \left (  c_p |x|^{p^\prime} \right )= 1$$
sets the limits to the optimal regularity and gives rise to the following well known open problem among experts in the field.

\bigskip

\noindent {\bf Conjecture} ($C^{p^\prime}$-regularity conjecture). {\it Solutions to \eqref{p-poisson} are locally of class $C^{1,\frac{1}{p-1}} = C^{p^\prime}$.} 

\bigskip

This problem touches very subtle issues in regularity theory. As  mentioned above, the conjecture is not true in the linear, uniformly elliptic setting, $p=2$, where merely $C^{1, \text{LogLip}}$-estimates are possible.  Notice further that a positive answer implies that $|x|^{p^\prime}$ -- a function whose $p$-laplacian is constant (real analytic) -- is the least regular among all functions whose $p$-laplacian is bounded. This is, at first sight, counterintuitive. 

We show in this paper that the conjecture holds true provided $p$-harmonic functions, which are the solutions of the homogeneous counterpart of \eqref{p-poisson}, are locally uniformly of class $C^{1,\alpha}$, with 
$$\alpha> \displaystyle \frac{1}{p-1}.$$
While this is still open in higher dimensions, it holds true in the plane, thus yielding a full proof of the conjecture in 2-$d$. The crucial estimate follows from results by Baernstein II and Kovalev in \cite{BK}, exploiting the fact that the complex gradient of a  $p$-harmonic function in the plane is a $K$-quasiregular gradient mapping. In a somewhat related issue, let us mention that, yet in the plane, Evans and Savin proved in \cite{ES} (see also \cite{Savin}) that infinity harmonic functions, i.e., viscosity solutions of 
$$
	\Delta_\infty u := u_{x_i x_j} u_{x_i} u_{x_j}=0,
$$ 
are locally of class $C^{1,\gamma}$ for some $0< \gamma \ll 1$. Whether infinity harmonic functions are of class $C^1$ in higher dimensions is still a major open problem in the field. 

We next state the main result of this paper.

\begin{theorem}\label{main thm} Let  $B_1 \subset \mathbb{R}^2$, and let $u \in W^{1,p}(B_1)$ be a weak solution of 
$$-\Delta_p u = f(x), \qquad p>2,$$
with $f\in L^\infty(B_1)$. Then $u \in C^{p^\prime}(B_{1/2})$ and
$$
	\|u\|_{C^{p^\prime}(B_{1/2})} \le C_p \left( \|f\|^{\frac{1}{p-1}}_{L^\infty(B_1)} + \|u\|_{L^p(B_1)} \right).
$$
\end{theorem}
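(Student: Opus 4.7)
The plan is to prove the sharp pointwise estimate
$$|u(x) - u(x_0) - \nabla u(x_0) \cdot (x-x_0)| \le C|x-x_0|^{p'}$$
at every $x_0 \in B_{1/2}$ by a Caffarelli-type geometric iteration, feeding into the compactness step the planar $C^{1,\alpha}$-regularity of $p$-harmonic functions with exponent $\alpha = \alpha_p > 1/(p-1)$ supplied by Baernstein and Kovalev through their theory of quasiregular gradient maps.

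First I would normalize by rescaling: assume $\|u\|_{L^p(B_1)} \le 1$ and, after a further dilation, $\|f\|_{L^\infty(B_1)} \le \varepsilon_0$ for any preassigned $\varepsilon_0 > 0$. The exponent $p' = 1 + 1/(p-1)$ is precisely the critical scaling of the equation: under the zoom $v(x) = r^{-p'}u(rx)$, the rescaled source has the same $L^\infty$-norm as $f$, so the iteration can be run scale-invariantly. Fixing the origin as base point, the goal is to produce inductively a sequence of affine maps $\ell_k(x)=a_k+b_k\cdot x$ such that
$$\sup_{B_{r^k}}|u-\ell_k|\le r^{k p'}, \qquad |a_{k+1}-a_k|+r^k|b_{k+1}-b_k|\le C\,r^{k p'},$$
from which a telescoping summation immediately yields the claimed $C^{p'}$ bound at the origin, and by translation the full $C^{p'}(B_{1/2})$ estimate.

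The heart of the argument is the one-step improvement of flatness: if $\|u\|_\infty\le 1$ and $\|f\|_\infty\le\varepsilon_0$, then one can find an affine $\tilde\ell$ with $\sup_{B_r}|u-\tilde\ell|\le r^{p'}$ for some universal $r<1$. I would prove this by compactness-contradiction: a sequence of counterexamples $u_j$ whose sources obey $\varepsilon_0^{(j)} \to 0$ subconverges to a $p$-harmonic $u_\infty$, which by Baernstein-Kovalev admits a first-order affine approximation with error $\le Cr^{1+\alpha_p}$. Since $\alpha_p > p'-1$, choosing $r$ small forces this error below $\tfrac12 r^{p'}$, while uniform closeness of $u_j$ to $u_\infty$ absorbs the remaining half, contradicting the negation of the conclusion.

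The main obstacle is to propagate the flatness improvement through all scales of the iteration. After extracting an affine corrector $\ell_k$, the rescaled function $v_k(x)=r^{-kp'}(u(r^k x)-\ell_k(r^k x))$ solves an equation whose operator is the $p$-Laplacian translated in the gradient variable by $\beta_k:=r^{-k/(p-1)}b_k$, so the base equation is not preserved under affine subtraction and $\beta_k$ may be large. The resolution rests on a dichotomy keyed to the size of $\beta_k$: when $|\beta_k|$ is large, the shifted operator is uniformly elliptic and classical Schauder-type theory delivers an even sharper rate; when $|\beta_k|$ is small, we remain in the genuinely degenerate regime, and the \emph{new oscillation estimate} of the abstract supplies the uniform control near critical points that is needed in order to re-apply the approximation lemma at the next scale. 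Combining both alternatives closes the induction, and the final bound in the theorem follows by unwinding the initial rescaling, with the constant depending only on $p$.
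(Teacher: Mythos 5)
Your setup (the $p'$-invariant rescaling, the normalization $\|f\|_\infty\le\varepsilon_0$, and the compactness-contradiction flatness step using the Baernstein--Kovalev exponent $\alpha_p>\frac{1}{p-1}$ for the $p$-harmonic limit) coincides with the paper's Lemmas \ref{l. correc} and \ref{l. discrete}. The gap is in how you close the iteration. You correctly identify the obstruction -- after subtracting $\ell_k$ the rescaled function solves a gradient-shifted equation, so the approximation lemma does not reapply -- but your resolution of the degenerate branch is to invoke ``the new oscillation estimate of the abstract.'' That estimate, $\sup_{B_r}|u-u(0)|\lesssim r^{p'}+|\nabla u(0)|\,r$, is Theorem \ref{edu-crack}, i.e.\ precisely the main new result that has to be proven; quoting it as an external tool makes the proposal circular, and nothing in your sketch derives it. If instead you try to run the dichotomy on its own, the small-$|\beta_k|$ branch needs an approximation lemma and a $C^{1,\alpha}$ estimate with $\alpha>\frac{1}{p-1}$ for the shifted homogeneous equation $\mathrm{div}\left(|\nabla v+\beta_k|^{p-2}(\nabla v+\beta_k)\right)=0$ with constants uniform in $\beta_k$, together with compactness for the whole family of shifted operators; and in the large-$|\beta_k|$ branch the ``even sharper rate'' must be quantified as an exponent $\ge p'-1$ uniformly in the shift. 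None of this is constructed, and the claimed bookkeeping $|a_{k+1}-a_k|+r^k|b_{k+1}-b_k|\le C r^{kp'}$ is exactly what cannot be obtained without it. So as written the induction does not close.

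For contrast, the paper never subtracts affine functions at all: from the one-step flatness of Lemma \ref{l. discrete} it retains only the triangle-inequality consequence $\sup_{B_{\lambda_0}}|u-u(0)|\le\lambda_0^{p'}+|\nabla u(0)|\lambda_0$ (Corollary \ref{corollary1}), which involves subtracting a constant only -- harmless for the PDE -- and iterates that, with the rescaling $v(x)=\frac{u(\lambda_0 x)-u(0)}{\lambda_0^{p'}+|\nabla u(0)|\lambda_0}$ keeping the right-hand side below $\delta_0$ and the influence of $|\nabla u(0)|$ summing as a geometric series; this yields Theorem \ref{edu-crack}. Points with $|\nabla u(0)|>r^{\frac{1}{p-1}}$ are then treated by a single rescaling $w(x)=\frac{u(\mu x)-u(0)}{\mu^{p'}}$, $\mu=|\nabla u(0)|^{p-1}$, whose $L^\infty$ bound comes from the oscillation estimate itself and for which uniform ellipticity gives $C^{1,\beta}$ with $\beta\ge\frac{1}{p-1}$. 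To repair your argument you should either prove the oscillation estimate along these lines or supply the uniform-in-shift approximation and regularity theory that your affine iteration presupposes.
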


Similar results have been independently obtained by Lindgren and Lindqvist using somewhat different methods. In \cite{LL} (see also \cite{KM}), they show that planar solutions to \eqref{p-poisson} are locally of class $C^{p^\prime-\epsilon}$, for $\epsilon >0$. The lack of a global $C^{1,\alpha}$-estimate for $p$-harmonic functions in 2-$d$ was the main reason precluding the passing from this asymptotic version to the optimal result.

Our approach is based on a new oscillation estimate (see Theorem \ref{edu-crack}), which is interesting on its own and reveals some essential nuances of the problem, disclosing, for example, the conjecture in higher dimensions in a number of relevant scenarios (see \cite{ATU2}). It gives a precise control on the oscillation of a solution to \eqref{p-poisson} in terms of the magnitude of its gradient,
\begin{equation}\label{crack-intro}
	\sup\limits_{B_r} |u(x) - u(0)| \lesssim r^{p^\prime} + |\nabla u(0)| r,
\end{equation}
and yields, by geometric iteration, improved $C^{1,\alpha}$ regularity estimates.
The insight to obtain such a refined control comes from the striking results in \cite{T2}, where {\it improved} regularity estimates are obtained for degenerate equations precisely along their set of critical points, $\{\nabla u = 0\}$. 

We are convinced the set of ideas and insights included in this paper are robust and versatile, and will foster future developments of the theory, far beyond the $C^{p^\prime}$-regularity conjecture. For example, when implementing Caffarelli's geometric approach to the analysis of $C^{1,\alpha}$-estimates for degenerate problems, a key obstruction that frequently arises is that if $u$ is a solution and $\ell$ is an affine function, then no PDE is {\it a priori} satisfied by $(u-\ell)$. That is the case, for example, in \cite{IS}, where the  Ishii-Lions method was employed to control affine perturbations of the solution $u$. Our new oscillation estimate provides a definitive tool for treating this common issue in regularity theory.

The paper is organized as follows. To render the paper reasonably self-contained, we gather in section \ref{sct pre} the results concerning the regularity of $p$-harmonic functions in the plane that will be used in the proof of Theorem \ref{main thm}. In section \ref{sct corrector} we introduce $C^{1}$-small correctors that link the regularity theory for \eqref{p-poisson} to that of $p$-harmonic functions. The key, new oscillation estimate is delivered in section \ref{sct crack}, and in section \ref{sct soft} we conclude the proof of the main theorem.   

Throughout the paper, we say a constant is \textit{universal} if it only depends on $p$.

\section{Flatland regularity for $p$-harmonic functions} \label{sct pre}

In this section we revisit the $C^{1,\alpha}$ regularity theory for $p$-harmonic functions, i.e., solutions to the homogeneous equation
\begin{equation}\label{Hom Eq}
	-\Delta_p u = 0.
\end{equation} 
That $p$-harmonic functions are locally of class $C^{1,\alpha(d,p)}$, for some exponent $0 < \alpha(d,p)  < 1$, that depends on the dimension $d$ and the power exponent $p$, is known since the late 60's (see \cite{U}). Away from the set of critical points
$$\mathscr{S}(u):=\{x \suchthat \nabla u(x) = 0\},$$
$p$-harmonic functions are $C^\infty$-smooth; however $C_\text{loc}^{1,\alpha(d,p)}$  is in fact the best possible regularity class since, along $\mathscr{S}(u)$, the Hessian of a $p$-harmonic function may become unbounded. Very little, if anything, is known concerning the value of the optimal, sharp H\"older exponent $\alpha(d,p)$ when $d\ge 3$. 

In the plane, however, a remarkable result in \cite{IM}, due to Iwaniec and Manfredi, assures that any $p$-harmonic function is of class $C^{1,\alpha^\ast(2,p)}$, for
\begin{equation}\label{IM exp}
	\alpha^\ast (2,p) = \frac{1}{6} \left ( \frac{p}{p-1} + \sqrt{1 + \frac{14}{p-1} + \frac{1}{(p-1)^2}} \right ),
\end{equation}
and  this regularity is optimal. The proof exploits the fact that the complex gradient of a $p$-harmonic function is a $K$-quasiregular mapping and uses a hodograph transformation to linearize the problem. Unfortunately, no explicit estimates, yielding a local universal control of the $C^{1,\alpha^\ast}$-norm of $u$, are written down in \cite{IM}. Near singular points we would have to examine the hodograph inversion process and track down the precise dependence of the successive norms involved -- a quite delicate issue \cite{M}. 

But, gloriously, there is more. In the apparently unrelated paper \cite{BK}, concerning non-divergence elliptic equations in the plane, Baernstein II and Kovalev show that $K$-quasiregular \textit{gradient} mappings are of class $C_\text{loc}^{1,\alpha}$, for an exponent $\alpha$ depending only on $K$, and obtain uniform estimates. For the reader's convenience we revisit their proof applied to our case. 

Let $u \in W_{loc}^{1,p} (B_1)$ be $p$-harmonic, $p>2$. Its complex gradient $\phi=\partial u / \partial z$ turns out to be a $(p-1)$-quasiregular mapping. This means that $\phi \in W_{loc}^{1,2} (B_1)$, which follows from estimates in \cite{BI} (see also \cite{AIM}), and that
\begin{equation}
\left| \frac{\partial \phi(z)}{\partial \overline{z}} \right| \leq \left( 1-\frac{2}{p} \right) \left| \frac{\partial \phi(z)}{\partial z} \right| , \quad \mbox{a.e.  in } B_1, 
\label{K-qr}
\end{equation}
which follows from giving \eqref{Hom Eq} the form of the complex equation
$$\frac{\partial \phi}{\partial \overline{z}} = \left( \frac{1}{p} - \frac{1}{2} \right) \left( \frac{\overline{\phi}}{\phi} \frac{\partial \phi}{\partial z} + \frac{\phi}{\overline{\phi}} \overline{\frac{\partial \phi}{\partial z}} \right).$$
Here $z=x+iy$ is the complex variable and the operators of complex differentiation are defined by
$$\frac{\partial }{\partial z} = \frac{1}{2} \left( \frac{\partial }{\partial x} - i \frac{\partial }{\partial y}\right) \qquad {\rm and} \qquad  \frac{\partial }{\partial \overline{z}} = \frac{1}{2} \left( \frac{\partial }{\partial x} + i \frac{\partial }{\partial y}\right).$$
The complex gradient $\phi$ is also a gradient mapping, i.e.,
$${\rm Im} \: \frac{\partial \phi}{\partial \overline{z}} = 0,$$
which holds since
$$\frac{\partial \phi}{\partial \overline{z}} = \frac{\partial^2 u }{\partial \overline{z} \partial z} = \frac{1}{4} \Delta u.$$
This fact can be used to significantly improve the lower bound on the Jacobian of $\phi$ (cf. \cite[Lemma 2.1]{BK}) in 
$$J_\phi= \det \nabla \phi \geq \frac{1}{p-1} \left| \nabla \phi\right|^2,$$
which follows from \eqref{K-qr} alone. This is the crucial new ingredient to prove (see \cite[Section 2, and in particular inequality (2.5)]{BK}) that 
$$\int_{B_r} |\nabla \phi|^2 \leq (p-1)\: (2r)^{2\alpha(p)} \int_{B_{1/2}} |\nabla \phi|^2, \qquad 0<r\leq \frac{1}{2},$$
with
$$\alpha (p) = \frac{1}{2p} \left ( -3 -\frac{1}{p-1} +  \sqrt{33 + \frac{30}{p-1} + \frac{1}{(p-1)^2}}  \right ),$$
which gives, by Morrey's lemma \cite[Lemma 12.2]{GT}, that $\phi \in C_{\rm loc}^{0,\alpha (p) }$ and 
$$
	[\phi]_{C^{0,\alpha (p)}(B_{1/2})} \leq 2^{1+\alpha (p)}\sqrt{\frac{p-1}{\alpha(p)}} \|\nabla \phi\|_{L^2(B_{1/2})}.
$$
Now, using standard estimates (see \cite{LL} for example), we obtain
$$[\phi]_{C^{0,\alpha (p)}(B_{1/2})} \le C_p \| u\|_{L^\infty (B_1)},$$
for a universal constant $C_p$ that only depends on $p$.
Finally observe that, for any $p>2$, we indeed have 
\begin{equation} \label{alfi}
\alpha^\ast (2,p) >\alpha (p) > \displaystyle \frac{1}{p-1}.
\end{equation} 
We summarize these results in the following proposition, which will be crucial in the proof of our main result.

\begin{proposition} \label{IM} For any $p>2$, there exists $0 < \tau_0 < \frac{p-2}{p-1}$ such that $p$-harmonic functions in $B_1 \subset \mathbb{R}^2$ are locally of class $C^{p^\prime+\tau_0}$. Furthermore, if $u \in W^{1,p} (B_1) \cap C(B_1)$ is $p$-harmonic in the unit disk $B_1\subset \mathbb{R}^2$ then there is a constant $C_p$, depending only on $p$, such that
\begin{equation}
	[\nabla u]_{C^{0,\frac{1}{p-1}+\tau_0}( B_{1/2})} \le C_p \| u\|_{L^\infty (B_1)} .
\label{xuxu}
\end{equation}
\end{proposition}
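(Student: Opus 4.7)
The plan is to assemble the building blocks already developed in the preceding discussion. The key move is to set $\tau_0 := \alpha(p) - \frac{1}{p-1}$, where $\alpha(p)$ is the Baernstein--Kovalev exponent. By the lower bound recorded in \eqref{alfi}, this choice satisfies $\tau_0 > 0$, so the first assertion --- that $p$-harmonic functions are locally of class $C^{p^\prime + \tau_0}$ --- is immediate. To secure the upper bound $\tau_0 < \frac{p-2}{p-1}$ I would verify the equivalent statement $\alpha(p) < 1$; this is an elementary algebraic check on the closed-form expression for $\alpha(p)$, noting that $\alpha(2) = 1$ and that $\alpha(p)$ is strictly decreasing in $p$ on $(2,\infty)$.

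For the quantitative estimate \eqref{xuxu}, the first step is to recast $u$ through its complex gradient $\phi = \partial u/\partial z$. The calculations laid out in the paragraph preceding the statement show that $\phi$ is a $(p-1)$-quasiregular gradient mapping satisfying \eqref{K-qr} together with the reality condition $\mathrm{Im}\bigl(\partial\phi/\partial\overline{z}\bigr) = 0$. I would then invoke the improved Jacobian lower bound of Baernstein--Kovalev and the Morrey-type decay for $\int_{B_r}|\nabla\phi|^2$ recalled in the text, combined with Morrey's lemma, to arrive at
$$[\phi]_{C^{0,\alpha(p)}(B_{1/2})} \le 2^{1+\alpha(p)} \sqrt{\frac{p-1}{\alpha(p)}} \, \|\nabla\phi\|_{L^2(B_{1/2})}.$$
The second step would be to control $\|\nabla\phi\|_{L^2(B_{1/2})}$ by $\|u\|_{L^\infty(B_1)}$ via standard Caccioppoli-type estimates for $p$-harmonic functions (as in \cite{LL}), yielding a universal bound depending only on $p$. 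Since $|\nabla u| = 2|\phi|$ under the identification of $\mathbb{C}$ with $\mathbb{R}^2$, the H\"older seminorm of $\phi$ transfers directly to that of $\nabla u$ with exponent $\alpha(p) = \frac{1}{p-1} + \tau_0$, producing \eqref{xuxu}.

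The only non-routine step is verifying the strict inequalities in \eqref{alfi}, in particular $\alpha(p) > \frac{1}{p-1}$. I would handle this by setting $t := 1/(p-1) \in (0,1)$, which converts the claim into a single-variable inequality of the form
$$\sqrt{33 + 30t + t^2} > 3 + t + \frac{2(1+t)}{1}\cdot\frac{1}{1+t^{-1}},$$
and after isolating the square root and squaring, one is left with a polynomial inequality in $t$ to be checked by elementary monotonicity on $(0,1)$. I expect this algebraic verification (and the analogous one for $\alpha(p)<1$) to be the main obstacle, since it hinges on the precise numerical coefficients defining $\alpha(p)$ rather than on any soft estimate; everything else is bookkeeping of the ingredients already assembled in the preceding exposition.
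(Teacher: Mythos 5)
Your proposal is correct and coincides with the paper's own argument: take $\tau_0=\alpha(p)-\frac{1}{p-1}$ with $\alpha(p)$ the Baernstein--Kovalev exponent, use the quasiregular-gradient machinery, the Morrey decay and the standard $L^2$-bound on $\nabla\phi$ in terms of $\|u\|_{L^\infty(B_1)}$ for \eqref{xuxu}, and reduce $\tau_0<\frac{p-2}{p-1}$ to $\alpha(p)<1$. One algebraic slip to fix: with $t=\frac{1}{p-1}$ and $p=\frac{1+t}{t}$, the claim $\alpha(p)>t$ reduces to $\sqrt{33+30t+t^2}>5+3t$, i.e.\ $8>8t^2$, whereas your displayed right-hand side simplifies to $3+3t$, which is weaker than what is needed; likewise $\alpha(p)<1$ reduces directly to $t^2<1$, so no monotonicity argument in $p$ is required.
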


In the next three sections we will provide a proof of the the $C^{p^\prime}$-regularity conjecture in the plane, i.e. a proof of Theorem \ref{main thm}.  

\section{Existence of $C^1$-small correctors} \label{sct corrector}

In this section, we show that if $u$ is a normalized solution of 
$$
	-\Delta_p u = f(x),
$$ 
and $\|f\|_\infty \ll 1$, then we can find a  $C^1$ corrector $\xi$, with $\|\xi\|_{C^1} \ll 1$, such that $u + \xi$ is $p$-harmonic. This will allow us to frame the $C^{p^\prime}$ conjecture into the formalism of the so called geometric tangential analysis, e.g. \cite{C1}, \cite{ART, AT} and \cite{T0, T1, T2, T3, T4, TU}. Here is the precise  statement.

\begin{lemma}\label{l. correc}
Let $u \in W^{1,p} (B_1)$ be a weak solution of $-\Delta_p u = f$ in $B_1$, with $\|u\|_\infty \leq 1$. Given $\epsilon >0$, there exists $\delta=\delta (p,d,\epsilon)>0$ such that if $\|f\|_\infty \leq \delta$ then we can find a corrector $\xi \in C^1(B_{1/2})$, with 
\begin{equation}
| \xi (x) | \leq \epsilon \quad \mathrm{and} \quad | \nabla \xi (x) | \leq \epsilon, \quad \mathrm{in} \ B_{1/2}
\label{sc1}
\end{equation}
such that
\begin{equation}
-\Delta_p (u+\xi) = 0\quad \mathrm{in} \ B_{1/2}.
\label{sc2}
\end{equation}
\end{lemma}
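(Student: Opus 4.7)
My plan is to take $\xi := h - u$, where $h$ is the $p$-harmonic replacement of $u$ in $B_{3/4}$, i.e., the unique weak solution of $-\Delta_p h = 0$ in $B_{3/4}$ with $h - u \in W_0^{1,p}(B_{3/4})$. With this choice, $u + \xi \equiv h$ is $p$-harmonic in $B_{3/4} \supset B_{1/2}$, so \eqref{sc2} holds for free, and the entire content of the lemma reduces to the quantitative smallness \eqref{sc1}. I intend to establish this via a standard compactness/contradiction scheme in the spirit of Caffarelli.

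First I would record the elementary energy estimate: testing the weak formulations of $-\Delta_p u = f$ and $-\Delta_p h = 0$ against the admissible function $u - h \in W_0^{1,p}(B_{3/4})$, subtracting the two identities, and invoking the monotonicity inequality
$$\langle |A|^{p-2}A - |B|^{p-2}B, \, A - B\rangle \ge c_p |A-B|^p \quad (p \ge 2),$$
together with the Poincaré inequality, one obtains
$$\|\nabla(u - h)\|_{L^p(B_{3/4})} \le C_p \|f\|_{L^\infty(B_1)}^{1/(p-1)}.$$
Any smallness hypothesis on $\|f\|_\infty$ is thereby converted into smallness of $\xi$ in $W^{1,p}(B_{3/4})$.

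Now suppose \eqref{sc1} fails for some $\epsilon_0 > 0$. Then there exist sequences $u_k \in W^{1,p}(B_1)$ with $\|u_k\|_\infty \le 1$, and $f_k$ with $\|f_k\|_\infty \to 0$, solving $-\Delta_p u_k = f_k$, such that the associated correctors $\xi_k = h_k - u_k$ satisfy $\|\xi_k\|_{L^\infty(B_{1/2})} + \|\nabla \xi_k\|_{L^\infty(B_{1/2})} > \epsilon_0$. The maximum principle gives $\|h_k\|_{L^\infty(B_{3/4})} \le 1$, so the DiBenedetto--Tolksdorf interior $C^{1,\beta}$ estimate --- applied separately to the inhomogeneous $u_k$ (with uniformly bounded source $f_k$) and to the $p$-harmonic $h_k$ --- produces uniform $C^{1,\beta}(B_{1/2})$ bounds for both sequences. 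Arzel\`a--Ascoli then extracts $u_k \to u_\infty$ and $h_k \to h_\infty$ in $C^1(B_{1/2})$ along a common subsequence, and the $W^{1,p}$-smallness from the previous step forces $u_\infty \equiv h_\infty$ in $B_{1/2}$. Hence $\xi_k \to 0$ in $C^1(B_{1/2})$, contradicting the standing hypothesis.

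The main obstacle is essentially bookkeeping: guaranteeing that the DiBenedetto--Tolksdorf $C^{1,\beta}$ estimate is invoked with constants independent of $k$, in particular that the estimate does not deteriorate as $\|f_k\|_\infty \to 0$. The normalization $\|u_k\|_\infty \le 1$ together with the harmless assumption $\|f_k\|_\infty \le 1$ delivers precisely this uniform control, so the argument closes without further complications.
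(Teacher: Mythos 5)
Your proposal is correct, but it follows a route that differs from the paper's in a meaningful way. The paper argues purely by compactness and contradiction: it takes sequences $u_j$, $f_j$ with $\|f_j\|_\infty\le 1/j$, uses the DiBenedetto--Tolksdorf estimates to extract a $C^1(B_{1/2})$ limit $u_\infty$, passes to the limit in the equation to see that $u_\infty$ is $p$-harmonic, and then uses the abstract limit itself to build the corrector, $\xi_j:=u_\infty-u_j$. You instead take an explicit corrector from the start, namely $\xi=h-u$ with $h$ the $p$-harmonic replacement of $u$ in $B_{3/4}$, so that \eqref{sc2} is automatic and only the smallness \eqref{sc1} remains; the monotonicity/energy comparison converts $\|f\|_\infty\le\delta$ into $W^{1,p}$-smallness of $\xi$, and the uniform interior $C^{1,\beta}$ bounds (plus the maximum principle for $h$, which indeed gives $\|h\|_\infty\le\|u\|_\infty\le 1$ by comparison with constants) upgrade this to $C^1$-smallness via compactness. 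The trade-offs: your construction avoids the paper's step of passing to the limit in the degenerate PDE (each $h_k$ is exactly $p$-harmonic by construction, and you never need $h_\infty$ to solve anything), and it yields a concrete corrector; moreover, if you replace your final Arzel\`a--Ascoli step by a standard interpolation between the $W^{1,p}$ (or $L^p$) smallness and the uniform $C^{1,\beta}$ bound on a slightly larger ball, you obtain an explicit modulus $\delta(\epsilon)$, which the paper's contradiction argument cannot provide. The paper's proof, on the other hand, is shorter and dispenses with the energy comparison and the replacement construction. Both arguments ultimately rest on the same compactness machinery, and your bookkeeping concern is handled exactly as you say: the bounds $\|u_k\|_\infty\le 1$, $\|h_k\|_\infty\le 1$, $\|f_k\|_\infty\le 1$ make all the interior estimates uniform in $k$.
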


\begin{proof}
Suppose the result does not hold. We can then find $\epsilon_0 >0$ and sequences of functions $(u_j)$ and $(f_j)$ in $W^{1,p} (B_1)$ and $L^\infty (B_1)$, respectively, such that 
$$-\Delta_p u_j = f_j \quad \mathrm{in} \ B_1; \qquad \|u_j\|_\infty \leq 1; \qquad \|f_j\|_\infty \leq 1/j$$
but, nonetheless, for every $\xi \in C^1(B_{1/2})$ such that
$$-\Delta_p (u_j+\xi) = 0\quad \mathrm{in} \ B_{1/2},$$
we have either $| \xi (x_0) | > \epsilon_0$ or $| \nabla \xi (x_0) | > \epsilon_0$, for a certain $x_0 \in B_{1/2}$. 

From classical estimates for the $p$-Poisson equation, we can extract a subsequence, such that, upon relabelling, 
$$u_j \longrightarrow u_\infty$$
in $C^1 (B_{1/2})$ as $j \to \infty$. Passing to the limit in the pde, we obtain 
$$-\Delta_p u_\infty =0 \quad \mathrm{in} \ B_{1/2}, \qquad \mathrm{with} \quad \|u_\infty\|_\infty \leq 1.$$
Now, let $\xi_j := u_\infty - u_j$. For $j_\ast \gg 1$, we have
$$-\Delta_p (u_{j_\ast}+\xi_{j_\ast}) = -\Delta_p u_\infty  = 0\quad \mathrm{in} \ B_{1/2}$$
and 
$$
	| \xi_{j_\ast} (x) | \leq \epsilon_0 \quad \mathrm{and} \quad | \nabla \xi_{j_\ast} (x) | \leq \epsilon_0, \quad \forall x \in B_{1/2},
$$
thus reaching a contradiction.

\end{proof}

We conclude this section by commenting that in order to prove Theorem \ref{main thm} it is enough to establish it for normalized solutions with small RHS, i.e., with $\|f\|_\infty \le \delta_0$. Indeed, if $u$ verifies
$-\Delta_p u = f(x)$, with $f\in L^\infty$, then the function
$$
	v(x) := \dfrac{u(\theta   x)}{\|u\|_\infty}
$$
is obviously normalized and
$$
	-\Delta_p v = \dfrac{\theta^{p}}{\|u\|^{p-1}_\infty} f(\theta x).
$$
Thus, choosing
$$
	\theta := \sqrt[p]{\frac{\delta_0 \|u\|_{\infty}^{p-1}}{\|f\|_\infty}},
$$
$v$ satisfies \eqref{p-poisson}, with small RHS. Once Theorem \ref{main thm} is proven for $v$, it immediately gives the corresponding estimate for $u$. 

\section{Analysis on the critical set} \label{sct crack}

In this section, based on an iterative reasoning, we establish the main tool that allows us to prove the $C^{p^\prime}$ conjecture in the plane. The following result is the first step in the iteration.

\begin{lemma}\label{l. discrete} There exists $0<\lambda_0<1/2$ and $\delta_0 >0$ such that if $\|f\|_\infty \leq \delta_0$ and $u \in W^{1,p} (B_1)$ is a weak solution of $-\Delta_p u = f$ in $B_1$, with $\|u\|_\infty \leq 1$, then 
$$
	\sup_{x\in B_{\lambda_0}} \Big| u(x) - \left[ u(0) + \nabla u(0) \cdot x \right] \Big| \leq {\lambda^{p^\prime}_0}.
$$
\end{lemma}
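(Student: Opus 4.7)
The plan is to combine the $C^1$-corrector lemma (Lemma \ref{l. correc}) with the sharp planar $C^{1,\frac{1}{p-1}+\tau_0}$-estimate for $p$-harmonic functions from Proposition \ref{IM}. The idea is to approximate $u$ by a $p$-harmonic function $h=u+\xi$ with $\xi$ being $C^1$-small; to invoke the better-than-critical regularity of $h$ to control its first-order Taylor remainder at the origin by $|x|^{p^\prime+\tau_0}$; and then to transfer this bound back to $u$, paying only a controllable price coming from $\xi$ itself.

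In more detail, pick $\epsilon>0$ to be chosen below. Lemma \ref{l. correc} furnishes $\delta=\delta(p,\epsilon)>0$ and, provided $\|f\|_\infty\le\delta$, a corrector $\xi\in C^1(B_{1/2})$ with $|\xi|,|\nabla\xi|\le\epsilon$ in $B_{1/2}$ such that $h:=u+\xi$ is $p$-harmonic in $B_{1/2}$. Since $\|h\|_\infty\le 1+\epsilon\le 2$, Proposition \ref{IM} (applied after a harmless rescaling) yields $[\nabla h]_{C^{0,\frac{1}{p-1}+\tau_0}(B_{1/4})}\le C_p$, whence
$$|h(x)-h(0)-\nabla h(0)\cdot x|\le C_p\,|x|^{p^\prime+\tau_0},\qquad x\in B_{1/4}.$$
By the classical $C^{1,\beta}_{\loc}$ theory of DiBenedetto/Tolksdorf, $\nabla u(0)$ exists, so $\nabla h(0)=\nabla u(0)+\nabla\xi(0)$. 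Then the decomposition
$$u(x)-u(0)-\nabla u(0)\cdot x=\bigl[h(x)-h(0)-\nabla h(0)\cdot x\bigr]-\bigl[\xi(x)-\xi(0)-\nabla\xi(0)\cdot x\bigr],$$
combined with the trivial bound $|\xi(x)-\xi(0)-\nabla\xi(0)\cdot x|\le 2\epsilon|x|$ coming from $\|\nabla\xi\|_\infty\le\epsilon$, gives, for every $\lambda_0\le 1/4$ and every $x\in B_{\lambda_0}$,
$$|u(x)-u(0)-\nabla u(0)\cdot x|\le C_p\,\lambda_0^{p^\prime+\tau_0}+2\epsilon\,\lambda_0.$$

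The parameters are tuned in two asymmetric stages, and this asymmetry is the main point of the argument. First I fix $\lambda_0\in(0,1/4)$ so small that $C_p\,\lambda_0^{\tau_0}\le 1/2$; this absorbs the $p$-harmonic error into $\tfrac{1}{2}\lambda_0^{p^\prime}$ and relies essentially on the planar strict inequality $\tau_0>0$ from \eqref{alfi}. Only after $\lambda_0$ is chosen do I pick $\epsilon:=\tfrac{1}{4}\lambda_0^{p^\prime-1}$, so that $2\epsilon\lambda_0\le\tfrac{1}{2}\lambda_0^{p^\prime}$, and finally set $\delta_0:=\delta(p,\epsilon)$. Adding the two halves yields the claimed bound $\lambda_0^{p^\prime}$. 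The main obstacle is precisely that the corrector is only $C^1$-small, not $C^{1,\alpha}$-small, so $\xi$ produces an error of order $\epsilon\lambda_0$ rather than $\epsilon\lambda_0^{p^\prime}$; this forces $\epsilon$ to be much smaller than $\lambda_0$ and is the reason $\lambda_0$ must be selected \emph{before} $\epsilon$.
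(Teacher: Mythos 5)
Your proposal is correct and follows essentially the same route as the paper: approximate $u$ by the $p$-harmonic function $u+\xi$ via Lemma \ref{l. correc}, use the better-than-$p^\prime$ regularity of Proposition \ref{IM} to bound its Taylor remainder by $C_p\lambda_0^{p^\prime+\tau_0}$, and then fix $\lambda_0$ first and $\epsilon$ (hence $\delta_0$) afterwards so both error terms fit under $\lambda_0^{p^\prime}$. The only differences are cosmetic — you bound the corrector contribution by $2\epsilon|x|$ where the paper uses $3\epsilon$, and you make explicit the harmless rescaling needed because $u+\xi$ is $p$-harmonic only in $B_{1/2}$ — neither of which changes the argument.
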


\begin{proof}
Take $\epsilon >0$ to be fixed later, apply the previous lemma to find $\delta_0$ and, under the smallness assumption on $f$, a respective corrector $\xi$ satisfying \eqref{sc1} and \eqref{sc2}. As  $(u+\xi)$ is $p$-harmonic in $B_{1/2}$ and, in view of Proposition \ref{IM}, $(u+\xi) \in C^{p^\prime + \tau_0}$, we can estimate in $B_{\lambda_0} \subset B_{1/2}$,
\begin{eqnarray*}
	\left | u(x) - \left[ u(0) + \nabla u(0) \cdot x \right] \right | & \leq &  \left | (u+\xi)(x) - \left[ (u+\xi)(0) + \nabla (u+\xi)(0) \cdot x \right] \right | \\
	& & + |\xi (x) | +| \xi (0) | + | \nabla \xi (0) \cdot x  | \\
	& \leq &  C_p {\lambda^{p^\prime + \tau_0}_0} + 3 \epsilon.
\end{eqnarray*}
We are also using the smallness of the corrector, assured by Lemma \ref{l. correc}. In order to complete the proof, we now make universal choices. Initially we choose $\lambda_0 \ll 1/2$ such that 
$$
	C_p {\lambda^{p^\prime + \tau_0} _0}< \frac{1}{2}  {\lambda^{p^\prime}_0}.
$$
In the sequel, we take 
$$
	\epsilon = \frac{1}{6}  {\lambda^{p^\prime}_0},
$$
which determines the smallness assumption on $\|f\|_\infty$ -- constant $\delta_0>0$ in the statement of the current lemma -- through the conclusion of Lemma \ref{l. correc}. Lemma \ref{l. discrete} is proven.

\end{proof}

The conclusion of Lemma \ref{l. discrete} does not, {\it per se}, allow an iteration since no obvious pde is satisfied by $u+\ell$, when $\ell$ is an affine function. Nonetheless, it provides the following information on the oscillation of $u$ in $B_{\lambda_0}$.
 
\begin{corollary}\label{corollary1}
Under the assumptions of the previous lemma, 
$$
	\sup_{x\in B_{\lambda_0}} | u(x) - u(0) | \leq {\lambda^{p^\prime}_0} + | \nabla u(0) | \lambda_0.
$$ 
\end{corollary}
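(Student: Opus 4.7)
The plan is to obtain this corollary as an essentially one-line consequence of Lemma \ref{l. discrete} via the triangle inequality, after discarding the affine tangent polynomial $\ell(x) := u(0) + \nabla u(0) \cdot x$ at the cost of the explicit correction $|\nabla u(0)|\lambda_0$. The interest of this repackaging is exactly the one flagged in the remark immediately preceding the corollary: although Lemma \ref{l. discrete} itself cannot be iterated directly (because $u - \ell$ does not solve any obvious PDE), the bound stated in the corollary expresses the oscillation of $u$ purely in terms of $u$ and $|\nabla u(0)|$, both of which behave cleanly under rescaling and so are suitable inputs for the induction scheme pointed to in \eqref{crack-intro}.

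Concretely, for each $x \in B_{\lambda_0}$, I would add and subtract $\nabla u(0)\cdot x$ inside $u(x)-u(0)$ and apply the triangle inequality:
$$|u(x) - u(0)| \;\leq\; \bigl|\, u(x) - [u(0) + \nabla u(0)\cdot x] \,\bigr| \;+\; |\nabla u(0)\cdot x|.$$
The first term is bounded uniformly by $\lambda_0^{p^\prime}$ on $B_{\lambda_0}$ thanks to Lemma \ref{l. discrete}, while for the second the Cauchy--Schwarz inequality together with $|x|\leq \lambda_0$ gives $|\nabla u(0)\cdot x| \leq |\nabla u(0)|\lambda_0$. Taking the supremum over $x \in B_{\lambda_0}$ then yields precisely the claimed inequality.

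I expect no genuine obstacle here; the statement is really a soft reformulation of Lemma \ref{l. discrete}. The only mildly strategic choice is to peel off exactly the affine part $\ell$, rather than a higher-order polynomial approximation, since this is what matches the scaling $r^{p^\prime} + |\nabla u(0)|r$ envisioned for the subsequent geometric iteration.
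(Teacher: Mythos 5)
Your proof is correct and is exactly the paper's argument: the authors also obtain the corollary as an immediate application of the triangle inequality to Lemma \ref{l. discrete}, splitting off the affine part $u(0)+\nabla u(0)\cdot x$ and bounding $|\nabla u(0)\cdot x|\le |\nabla u(0)|\lambda_0$ on $B_{\lambda_0}$. No issues.
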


\begin{proof}
This is a immediate application of the triangle inequality.

\end{proof}

The idea is now to iterate Corollary \ref{corollary1} in dyadic balls, keeping a precise track on the magnitude of the influence of $|\nabla u(0)|$.

\begin{theorem} \label{edu-crack}
Under the same assumptions of Lemma \ref{l. discrete}, there exists a constant $C>1$, depending only on $p$, such that 
$$
	\sup_{x\in B_r} | u(x) - u(0) | \leq C r^{p^\prime}  \left( 1 + | \nabla u(0) | \, r^{\frac{1}{1-p}} \right),  
$$ 
holds for all $0<r\ll 1$.
\end{theorem}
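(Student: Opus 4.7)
The plan is to bootstrap Corollary \ref{corollary1} by dyadic iteration on the scales $r_k := \lambda_0^k$, where $\lambda_0, \delta_0$ are the universal constants from Lemma \ref{l. discrete}. Writing $a := |\nabla u(0)|$, I would establish by induction on $k \in \mathbb{N}_0$ the existence of a universal constant $K = C_p \geq 2$ with
\[
    A_k := \sup_{B_{r_k}} |u(x) - u(0)| \leq K\bigl(r_k^{p^\prime} + a\, r_k\bigr).
\]
The case $k=0$ is immediate from $\|u\|_{\infty}\leq 1$.

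Assuming the bound at level $k$, the idea is to rescale to unit size via
\[
    v(x) := \frac{u(r_k x) - u(0)}{M_k}, \qquad M_k := K\bigl(r_k^{p^\prime} + a\, r_k\bigr),
\]
so that $\|v\|_{L^\infty(B_1)} \leq 1$ by the inductive hypothesis. A short calculation using the $(p-1)$-homogeneity of the $p$-Laplacian shows that $-\Delta_p v = \tilde f$ in $B_1$, with $\tilde f(x) = (r_k^{\,p}/M_k^{\,p-1})\, f(r_k x)$. The crucial observation is that the identity $p^\prime (p-1) = p$ together with $M_k \geq K r_k^{p^\prime}$ gives $M_k^{\,p-1} \geq K^{p-1} r_k^{\,p}$, so $\|\tilde f\|_\infty \leq K^{-(p-1)} \|f\|_\infty \leq \delta_0$. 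Corollary \ref{corollary1} is therefore applicable to $v$; using $v(0) = 0$ and $|\nabla v(0)| = (r_k/M_k)\, a$, it returns
\[
    \sup_{B_{\lambda_0}} |v| \leq \lambda_0^{p^\prime} + \frac{a\, r_{k+1}}{M_k}.
\]

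Multiplying by $M_k$ and undoing the change of variables yields
\[
    A_{k+1} \leq K r_{k+1}^{p^\prime} + K a\, \lambda_0^{p^\prime} r_k + a\, r_{k+1},
\]
and closing the induction reduces to the algebraic inequality $K\bigl(1 - \lambda_0^{p^\prime-1}\bigr) \geq 1$. Since $p^\prime > 1$ and $\lambda_0 \in (0, 1/2)$, the factor $1 - \lambda_0^{p^\prime-1}$ is a strictly positive universal number, so this is achieved by enlarging $K$ one last time. For an arbitrary $r \in (0, \lambda_0)$, choosing the integer $k$ with $r_{k+1} \leq r \leq r_k$ and absorbing $\lambda_0^{-p^\prime}$ and $\lambda_0^{-1}$ into the constant yields $\sup_{B_r}|u(x) - u(0)| \leq C\bigl(r^{p^\prime} + a\, r\bigr)$, which is precisely $C r^{p^\prime}\bigl(1 + a\, r^{1/(1-p)}\bigr)$ since $1 - p^\prime = 1/(1-p)$.

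The main technical hinge is the choice of normalization $M_k = K(r_k^{p^\prime} + a\, r_k)$ rather than the naive $K r_k^{p^\prime}$: the additional affine term $a\, r_k$ is exactly what keeps $\|v\|_\infty \leq 1$ when the gradient at the origin is large, and yet this enlargement must not spoil the smallness $\|\tilde f\|_\infty \leq \delta_0$ demanded by Lemma \ref{l. discrete}. The identity $p^\prime(p-1) = p$, which is what makes the profile $|x|^{p^\prime}$ solve the homogeneous scaling of the $p$-Laplacian, is precisely the algebraic fact that allows this balance to hold uniformly in $k$.
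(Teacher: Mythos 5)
Your proposal is correct and follows essentially the same route as the paper: a dyadic iteration of Corollary \ref{corollary1} under the rescaling $v(x)=\bigl(u(r_k x)-u(0)\bigr)/M_k$, with the identity $p^\prime(p-1)=p$ guaranteeing the rescaled source stays below $\delta_0$. The only cosmetic difference is that you close an induction with a fixed ansatz constant $K\geq(1-\lambda_0^{p^\prime-1})^{-1}$, whereas the paper tracks the accumulated gradient contribution explicitly and sums the geometric series $\sum_i(\lambda_0^{p^\prime-1})^i$ — the same quantity.
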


\begin{proof} We proceed by geometric iteration. Consider the universal constants $\lambda_0$ and $\delta_0$ obtained in the previous Lemma \ref{l. discrete} and let
$$v(x) = \frac{u(\lambda_0 x) - u(0)}{{\lambda^{p^\prime}_0} + | \nabla u(0) | \lambda_0}, \qquad x \in B_1.$$
We have $\|v \|_\infty \leq 1$, $v(0)=0$, and
$$
	\nabla v (0) = \frac{\lambda_0}{{\lambda^{p^\prime} _0}+ | \nabla u(0) | \lambda_0} \, \nabla u (0).
$$
Also, we have 
$$-\Delta_p v = \frac{\lambda_0^p}{\left( {\lambda^{p^\prime}_0} + | \nabla u(0) | \lambda_0 \right)^{p-1}} \, f(\lambda_0 x) \leq \frac{\lambda_0^p}{ {\lambda^{p^\prime (p-1)}_0} } \, | f(\lambda_0 x) | \leq \delta_0,$$
which entitles $v$ to Corollary \ref{corollary1}. Thus
$$
	\sup_{x\in B_{\lambda_0}} | v(x) - v(0) | \leq {\lambda^{p^\prime}_0} + | \nabla v(0) | \lambda_0, 
$$ 
which reads
$$
	\sup_{x\in B_{\lambda_0}} \left| \frac{u(\lambda_0 x) - u(0)}{{\lambda^{p^\prime} _0}+ | \nabla u(0) | \lambda_0} \right| \leq {\lambda^{p^\prime}_0} + \left| \frac{\lambda_0}{{\lambda^{p^\prime}_0} + | \nabla u(0) | \lambda_0} \, \nabla u (0) \right| \lambda_0,
$$
and hence
\begin{eqnarray*}
	\sup_{x\in B_{\lambda_0^2}} \left| u(x) - u(0) \right| & \leq & {\lambda^{p^\prime} _0}\Big[  {\lambda^{p^\prime}_0} + | \nabla u(0) | \lambda_0 \Big]+ \left| \nabla u (0) \right| \lambda_0^2\\
& \leq & \left( {\lambda^2_0} \right)^{p^\prime} + \left| \nabla u (0) \right| \lambda_0^2  \left(  1 + {\lambda^{p^\prime-1}_0}  \right).	
\end{eqnarray*}
Iterating this process and defining
$$
	a_k := \sup_{x\in B_{\lambda_0^k}} |u(x)-u(0)|,
$$
we obtain the inequality
\begin{eqnarray*}
a_{k} & \leq & \left( {\lambda^k_0} \right)^{p^\prime} +  \left| \nabla u (0) \right| {\lambda^k_0} \: \sum_{i=0}^{k-1} \left(  {\lambda^{p^\prime-1}_0} \right)^i\\
& \leq & \left( {\lambda^k_0} \right)^{p^\prime} +  \left| \nabla u (0) \right| {\lambda^k_0} \: \sum_{i=0}^{\infty} \left(  {\lambda^{p^\prime-1}_0} \right)^i\\
& \leq & \left( {\lambda^k_0} \right)^{p^\prime} +  \left| \nabla u (0) \right|  \: \frac{{\lambda^k_0}}{1-{\lambda^{p^\prime-1}_0}} ,
\end{eqnarray*}
valid for every $k \in \T$.

Now, given $0<r\ll 1$, let $k \in \T$ be such that $\lambda_0^{k+1} < r \leq \lambda_0^k$. Then
\begin{eqnarray*}
\sup_{x\in B_r} \frac{| u(x) - u(0) |}{r^{p^\prime}} & \leq & \sup_{x\in B_{\lambda_0^k}} \frac{| u(x) - u(0) |}{(\lambda^{k+1}_0)^{p^\prime}} = \frac{a_k}{(\lambda^{k+1}_0)^{p^\prime}}\\
& \leq & \frac{\left( {\lambda^k_0} \right)^{p^\prime} +  \left| \nabla u (0) \right|  \: \frac{{\lambda^k_0}}{1-{\lambda^{p^\prime-1}_0}} }{(\lambda^{k+1}_0)^{p^\prime}}\\
& \leq & \lambda^{-p^\prime}_0  + C(\lambda_0, p^\prime) \left| \nabla u (0) \right|  \left( {\lambda^k_0} \right)^{1-p^\prime}  \\
& \leq & \lambda^{-p^\prime}_0  + C(\lambda_0, p^\prime) \left| \nabla u (0) \right|  r^{1-p^\prime}  \\
& \leq & C   \left( 1 + | \nabla u(0) | \, r^{\frac{1}{1-p}} \right),
\end{eqnarray*}
as desired. Observe that $\lambda_0$ is a universal constant and so is $C$.

\end{proof}

In accordance to \cite[Theorem 3]{T2}, Theorem \ref{edu-crack} provides the aimed regularity along the set of critical points of $u$, $|\nabla u|^{-1}(0)$.
In fact, when $| \nabla u(0) | \leq r^{\frac{1}{p-1}}$, Theorem \ref{edu-crack} gives
\begin{eqnarray*}
	\sup_{x\in B_r} \Big| u(x) - \left[ u(0) + \nabla u(0) \cdot x \right] \Big| &  \leq & \sup_{x\in B_r} | u(x) - u(0) | + |  \nabla u(0)| \, r \\
	& \leq & (C+1) r^{p^\prime}.
\end{eqnarray*}

In the next section we show how Theorem \ref{edu-crack} can be used in its full strength to yield $C^{p^\prime}$-regularity at any point, regardless of the value of $|\nabla u|$; it will be a softer analysis.

\section{Analysis on the set of non-degenerate points} \label{sct soft} 

We now analyze the oscillation decay around points where the gradient is large. Recall our ultimate goal is to show that
$$
	\sup_{x\in B_r} \Big| u(x) - \left[ u(0) + \nabla u(0) \cdot x \right] \Big| \leq C\, r^{p^\prime}, \quad \forall \, 0<r\ll 1.
$$
For large values of  $|\nabla u|$, the operator is uniformly elliptic and hence stronger estimates are available. Assume then $| \nabla u(0) | > r^{\frac{1}{p-1}}$, define $\mu:= | \nabla u(0) |^{p-1}$ and take
$$
	w(x):=\frac{u(\mu x)  - u(0)}{\mu^{p^\prime}}.
$$
Clearly 
$$
	w(0)=0, \quad |\nabla w(0)| =1 \quad \text{and } -\Delta_p w = f(\mu x) \in L^\infty.
$$ 
Moreover, from Theorem \ref{edu-crack}, it follows that 
$$
	\sup_{x\in B_1}  |w(x)| = \sup_{x\in B_\mu} \frac{|u(x)-u(0)|}{\mu^{p^\prime}} \leq C,
$$
since $\mu^{\frac{1}{p-1}} = | \nabla u(0) |$. From classical $C^{1,\alpha}$ regularity estimates,  there exists a radius $\rho_0$, depending only on the data, such that
$$
	|\nabla w (x)| \geq \frac{1}{2}, \quad \forall\, x \in B_{\rho_0}.
$$
This implies that, in $B_{\rho_0}$, $w$ solves a uniformly elliptic equation. In particular, we have  
$$
	w \in C^{1,\beta} (B_{\rho_0}), \quad \mbox{for some }  \ \frac{1}{p-1} \leq \beta <1.
$$
As an immediate consequence,
$$
	\sup_{x\in B_r} \Big| w(x) - \nabla w(0) \cdot x \Big| \leq C\, r^{1+\beta}, \quad \forall \, 0<r<\frac{\rho_0}{2}
$$
which, in terms of $u$, reads
$$
	\sup_{x\in B_r} \Big| \frac{u(\mu x)  - u(0)}{\mu^{p^\prime}} - \mu^{1-p^\prime}\nabla u(0) \cdot x \Big| \leq C\, r^{1+\beta}.
$$
Since $p^\prime \leq 1+\beta$,  we conclude 
$$
	\sup_{x\in B_{r}} \Big| u(x)  - [u(0)  + \nabla u(0) \cdot  x ] \Big| \leq C\, r^{p^\prime}, \quad \forall \, 0<r<\mu\frac{\rho_0}{2}.
$$
Finally, for $\mu\frac{\rho_0}{2} \leq r < \mu$, we have
\begin{eqnarray*}
\sup_{x\in B_r} \Big| u(x) - \left[ u(0) + \nabla u(0) \cdot x \right] \Big| &  \leq &\sup_{x\in B_{\mu}} \Big| u(x)  - [u(0)  + \nabla u(0) \cdot  x ] \Big|\\
& \leq & \sup_{x\in B_\mu} | u(x) - u(0) | + |  \nabla u(0)| \, \mu\\
& \leq & (C+1) \mu^{p^\prime}\\
& \leq & C \left( \frac{2r}{\rho_0} \right)^{p^\prime}\\
& = & C r^{p^\prime}.
\end{eqnarray*}

In view of the reduction discussed at the end of Section \ref{sct corrector}, the proof of Theorem \ref{main thm} is complete.  \hfill $\square$

\bigskip

\noindent{\bf Acknowledgments.} The authors would like to thank Erik Lindgren and Juan Manfredi for their valuable comments and suggestions and Wenhui Shi for bringing \cite{BK} to our attention. 

This work was developed in the framework of the Brazilian Program \textsl{Ci\^encia sem Fronteiras}. The second and third authors thank the hospitality of ICMC--Instituto de Ci\^encias Matem\'aticas e de Computa\c{c}\~ao, from Universidade de S\~ao Paulo in S\~ao Carlos, where this work was initiated. D.A. supported by CNPq. E.V.T. partially supported by CNPq and Fapesp. J.M.U. partially supported by the Centre for Mathematics of the University of Coimbra -- UID/MAT/00324/2013, funded by the Portuguese Government through FCT/MCTES and co-funded by the European Regional Development Fund through the Partnership Agree\-ment PT2020.

\bigskip

\bibliographystyle{amsplain, amsalpha}

\end{document}